\newcommand{\renyi}{R\'enyi}
\newcommand{\renyis}{\renyi{ }}
\newcommand{\bs}{\boldsymbol}
\newcommand{\ef}{\mathbb{F}}
\newcommand{\efq}{\ef_q}
\newcommand{\bk}{\bs{k}}
\newcommand{\bK}{\bs{K}}
\newcommand{\bx}{\bs{x}}
\newcommand{\bA}{\bs{A}}
\newcommand{\ba}{\bs{a}}
\newcommand{\xkys}{X_{k,y}}
\newcommand{\gk}{g_{\bk}}
\newcommand{\gks}{g_k}
\newcommand{\gK}{g_{\bK}}
\DeclareMathOperator{\pr}{Pr}
\DeclareMathOperator{\cp}{cp}
\DeclareMathOperator{\charac}{char}
\DeclareMathOperator{\weight}{w}
\DeclareMathOperator{\dham}{d_H}
\newtheorem{proposition}{Proposition}
\newtheorem{lemma}[proposition]{Lemma}
\newtheorem{theorem}[proposition]{Theorem}
\newtheorem*{remark}{Remark}
\title[A lower bound on the average entropy]{A lower bound on the
average entropy of a function Determined up to a diagonal linear Map
on $\mathbb{F}_q^n$}   
\author[Yaron Shany]{Yaron Shany}
\address{25B Sirkin St. Kfar Saba, Israel}
\email{yaron.shany@gmail.com}
\author[Ram Zamir]{Ram Zamir}
\address{Department of EE-Systems, Tel Aviv University, Tel Aviv, Israel}
\email{zamir@eng.tau.ac.il}
\begin{document}

\begin{abstract}
In this note, it is shown that if $f\colon\efq^n\to\efq^n$ is any  
function and $\bA=(A_1,\ldots, A_n)$ is uniformly distributed over
$\efq^n$, then the average over $(k_1,\ldots,k_n)\in \efq^n$ of the
\renyis (and hence, of the Shannon) entropy of
$f(\bA)+(k_1A_1,\ldots,k_nA_n)$ is at least about $\log_2(q^n)-n$
bits. In fact, it is shown that the average collision probability of
$f(\bA)+(k_1A_1,\ldots,k_nA_n)$ is at most about $2^n/q^n$. 
\end{abstract}

\maketitle
\section{Introduction}

Suppose that $f\colon\efq\to \efq$ is an arbitrary function (where $q$
is a prime power and $\efq$ is the finite field of $q$ elements). Let
$A$ be a random variable uniformly distributed over $\efq$. Clearly,
$f(A)$ may be far from uniform, while $kA$ is uniform for all
$k\in\efq^*$. Is
$f(A)+kA$ nearly uniform for most values of $k\in \efq$?  More
generally, given a positive integer $n$, for
an arbitrary $f\colon \efq^n\to\efq^n$ and for $\bA$ uniformly
distributed over $\efq^n$, is\footnote{Throughout, we write $x_i$ for the
$i$th coordinate of a vector $\bx$. Also, for a function $f$ with codomain
$\efq^n$, we will write $f_i$ for the $i$th component of $f$
(post-composition of $f$ with the $i$th projection)} $f(\bA)+(k_1A_1,\ldots,k_nA_n)$
nearly uniform for most values of $\bk\in\efq^n$? 

Recall that the {\it Shannon entropy} $H(B)$ of a random variable $B$
taking values in a finite set $S$ is defined by\footnote{From this
point on, all logarithms are to the base of 2.}
$H(B):=-\sum_{s:\pr(B=s)\neq 0} \pr(B=s)\cdot\log(\pr(B=s))$, while
the {\it collision probability} of $B$, $\cp(B)$, is defined by 
$\cp(B):=\sum_{s\in S} \pr(B=s)^2=\pr(B=B')$, where $B'$ 
is an independent copy of $B$. The {\it \renyi} entropy of $B$,
$H_2(B)$, is defined by $H_2(B):=-\log(\cp(B))$.  A straightforward
application of Jensen's inequality shows that $H_2(B)\leq H(B)$.

Since both the \renyis entropy and the Shannon entropy measure
randomness (where for both entropies the maximum possible value of
$\log(|S|)$ is equivalent to having uniform distribution, and the
minimum possible value of 0 is equivalent to being deterministic), a
possible formal phrasing of the above question on
$f(\bA)+(k_1A_1,\ldots,k_nA_n)$ is:  How much smaller
than $\log(q^n)$ might the average over $\bk$ of the \renyis (or
Shannon) entropy be? 

The collision probability itself is yet an additional measure of
randomness, where the minimum collision probability of $1/|S|$ is
equivalent to having uniform distribution and the maximum possible
collision probability of 1 is equivalent to being deterministic.  So,
another possible formal phrasing of the question on
$f(\bA)+(k_1A_1,\ldots,k_nA_n)$ is:  How much larger
than $1/q^n$ might the average over $\bk$ of the collision probability
be? 

The  main motivation for this question is a
certain  side-information problem in information theory \cite{Zamir}.
Several neighboring questions were considered 
in the literature.  For example, the case $n=1$ of Theorem
\ref{thm:main} ahead extends Lemma 21 of \cite{KLSS}, stating that
for any $f\colon\efq\to\efq$ there exists $k\in \efq$ for which
$|\{f(x)+kx|x\in\efq\}|>q/2$.\footnote{It should be noted
that in this case ($n=1$), the result follows immediately from the
Leftover Hash Lemma as described, e.g., in Lemma 7.1 of \cite{Sti}, or
in Theorem 8 of \cite{CF}.} The same case of Theorem
\ref{thm:main} ahead also extends the main theorem of \cite{Car}, which
states that the average over $k\in\efq$  of $|\{f(x)+kx|x\in\efq\}|$
(for $f$ a polynomial of degree $<\charac(\efq)$) is at least
$q/(2-1/q)$. In addition, a somewhat similar
question, concerning the {\it min-entropy} of $a_1\cdot
f(\bA)+a_2\cdot\bA$ for random $a_1$ and $a_2$ in $\efq$ and for
large $q$ was implicitly considered in the merger 
literature,\footnote{The distribution of $a_1$ and $a_2$ depends on
whether the merger in question is the {\it linear} merger or the {\it
curve} merger, see, e.g., the introduction of \cite{DW}.  For example,
for the curve merger of \cite{DW}, it was shown in \cite{DKSS} that for any
$\varepsilon,\delta>0$, the weighted sum is
$\varepsilon$-close (in statistical distance) to having min-entropy
$(1-\delta)\cdot n\cdot \log(q)$, as long as $q\geq
(4/\varepsilon)^{1/\delta}$.} 
see, e.g., Sec. 3.1 of \cite{D}, and Theorem 18 of \cite{DKSS}.

The main contributions of the current note are the following two
theorems.

\begin{theorem}\label{thm:main}
Let $n\geq 1$ be an integer, let $f\colon\efq^n\to\efq^n$ be an
arbitrary function, and for $\bk\in 
\efq^n$, let $\gk\colon\efq^n\to\efq^n$ be defined by
$$
\gk(\bx):=f(\bx)+(k_1x_1,k_2x_2,\ldots,k_n x_n).
$$
Suppose that a random
variable $\bA$ is uniformly distributed over $\efq^n$.  Then
\begin{eqnarray*}
\frac{1}{q^n}\sum_{\bk\in \efq^n} H_2(\gk(\bA)) & \geq &\log(q^n) -
n\log\left(2-\frac{1}{q}\right).
\end{eqnarray*}
\end{theorem}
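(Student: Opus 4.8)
The plan is to pass from \renyis entropy to collision probability and then exploit concavity of the logarithm. Since $H_2(B)=-\log(\cp(B))$ and $-\log$ is convex, Jensen's inequality applied to the uniform average over $\bk$ gives
$$
\frac{1}{q^n}\sum_{\bk\in\efq^n}H_2(\gk(\bA))
=-\frac{1}{q^n}\sum_{\bk\in\efq^n}\log\bigl(\cp(\gk(\bA))\bigr)
\geq-\log\left(\frac{1}{q^n}\sum_{\bk\in\efq^n}\cp(\gk(\bA))\right).
$$
Hence it suffices to establish the collision-probability estimate announced in the abstract, namely that the average of $\cp(\gk(\bA))$ over $\bk$ is at most $(2-1/q)^n/q^n$; feeding this into the displayed inequality produces exactly the lower bound $\log(q^n)-n\log(2-1/q)$.

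To bound the average collision probability, I would write $\cp(\gk(\bA))$ as the probability that $\gk(\bA)=\gk(\bA')$ for an independent copy $\bA'$ of $\bA$, so that
$$
\cp(\gk(\bA))=\frac{1}{q^{2n}}\,\#\bigl\{(\bx,\bxt)\in\efq^n\times\efq^n:\gk(\bx)=\gk(\bxt)\bigr\}.
$$
Coordinatewise the equation $\gk(\bx)=\gk(\bxt)$ reads $f_i(\bx)-f_i(\bxt)=k_i(x_i'-x_i)$ for every $i$. Averaging over $\bk$ and interchanging the order of summation, the quantity $\frac{1}{q^n}\sum_{\bk}\cp(\gk(\bA))$ equals $q^{-3n}$ times the number of triples $(\bk,\bx,\bxt)$ satisfying this system.

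The heart of the argument is to count, for each fixed pair $(\bx,\bxt)$, the number of $\bk$ solving the system; this count decouples across coordinates. In coordinate $i$ there are three cases: if $x_i\neq x_i'$ then $k_i$ is uniquely determined; if $x_i=x_i'$ and $f_i(\bx)=f_i(\bxt)$ then every $k_i\in\efq$ is a solution; and if $x_i=x_i'$ but $f_i(\bx)\neq f_i(\bxt)$ then there is no solution. The \emph{only} way the arbitrary (and hence uncontrolled) function $f$ enters is through these last two cases, and this is the step to treat with care: I would discard the constraint $f_i(\bx)=f_i(\bxt)$, which can only increase the count, so that the per-coordinate contribution becomes $1$ when $x_i\neq x_i'$ and $q$ when $x_i=x_i'$. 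After this relaxation the number of valid $\bk$ factors as a product over coordinates, and summing over $(\bx,\bxt)$ turns the triple count into $\prod_{i=1}^{n}\bigl(\sum_{(x_i,x_i')}(\text{contribution})\bigr)$, where each factor equals $q\cdot q+(q^2-q)\cdot 1=2q^2-q$ (the $q$ diagonal pairs each contribute $q$, the remaining $q^2-q$ pairs each contribute $1$). This yields $(2q^2-q)^n$ triples and therefore an average collision probability of $q^{-3n}(2q^2-q)^n=(2-1/q)^n/q^n$, as needed. The main obstacle is conceptual rather than computational: recognizing that the worst case over all functions $f$ is captured by simply dropping the $f$-dependent constraint, after which the estimate factorizes perfectly over the $n$ coordinates.
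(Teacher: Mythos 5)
Your proposal is correct and follows essentially the same route as the paper: the identical Jensen reduction to the average collision probability, followed by the same key observation that for each fixed pair $(\bx,\bxt)$ the number of compatible $\bk$ is at most $q^{n-\dham(\bx,\bxt)}$, obtained by dropping the $f$-dependent constraint on coordinates where $x_i=x_i'$. The only difference is bookkeeping: you aggregate by factoring the count coordinatewise into $(2q^2-q)^n$, whereas the paper stratifies pairs by Hamming distance and sums via the binomial theorem -- two ways of expanding the same product.
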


The point of the theorem is that the average over $\bk$ of
$H_2(\gk(\bA))$ is at most about $n$ bits below the entropy of a uniform
distribution over $\efq^n$, regardless of $q$ and $f$. Of course,
since the Shannon entropy is not smaller than the \renyis entropy, we
may replace $H_2$ by $H$ in Theorem \ref{thm:main}. In fact, 
a stronger result is proven: 

\begin{theorem}\label{thm:main2}
Using the terminology of Theorem \ref{thm:main}, we have
\begin{equation}\label{eq:main2}
\frac{1}{q^n}\sum_{\bk\in \efq^n} \cp(\gk(\bA)) 
\leq 
\frac{1}{q^n}\left(2-\frac{1}{q}\right)^n, 
\end{equation}
with equality if for all $i$, $f_i(\bx)$ depends only on $x_i$.
\end{theorem}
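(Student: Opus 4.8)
The plan is to expand the collision probability as a sum over pairs of independent inputs, average over $\bk$ first, and then exploit the fact that the diagonal map decouples the $n$ coordinates. Writing $\bx,\bxt$ for two independent uniform samples, I would start from
$$
\cp(\gk(\bA)) = \frac{1}{q^{2n}}\sum_{\bx,\bxt\in\efq^n}\prod_{i=1}^n \mathbf{1}\{f_i(\bx)+k_ix_i = f_i(\bxt)+k_ix_i'\},
$$
since a collision means the two images agree in every coordinate. Rearranging the collision condition in the $i$th coordinate gives $f_i(\bx)-f_i(\bxt) = k_i(x_i'-x_i)$, a constraint involving $k_i$ alone.

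Next I would average over $\bk\in\efq^n$ and interchange the order of summation, pushing the average over $\bk$ inside the sum over $\bx,\bxt$. Because the $i$th constraint depends only on $k_i$, this average factors into a product of $n$ one-dimensional averages $\frac{1}{q}\sum_{k_i\in\efq}\mathbf{1}\{f_i(\bx)-f_i(\bxt) = k_i(x_i'-x_i)\}$. Each factor is easy to evaluate once $\bx,\bxt$ are fixed: if $x_i\neq x_i'$ there is exactly one solution $k_i$, so the factor equals $1/q$; if $x_i = x_i'$ the constraint no longer involves $k_i$, and the factor equals the indicator $\mathbf{1}\{f_i(\bx)=f_i(\bxt)\}$.

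At this point the averaged collision probability equals
$$
\frac{1}{q^{2n}}\sum_{\bx,\bxt}\ \prod_{i:\,x_i\neq x_i'}\frac{1}{q}\ \prod_{i:\,x_i = x_i'}\mathbf{1}\{f_i(\bx)=f_i(\bxt)\}.
$$
I would then bound each agreement indicator by $1$, which removes the dependence on $f$ and leaves $\frac{1}{q^{2n}}\sum_{\bx,\bxt}(1/q)^{|\{i:\,x_i\neq x_i'\}|}$. This sum factors over the $n$ coordinates, and in each coordinate $\sum_{x_i,x_i'\in\efq}(1/q)^{\mathbf{1}\{x_i\neq x_i'\}} = q\cdot 1 + (q^2-q)\cdot\frac{1}{q} = 2q-1$, yielding $(2q-1)^n/q^{2n} = \frac{1}{q^n}(2-1/q)^n$, exactly the claimed bound.

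The only inequality used is the crude bound $\mathbf{1}\{f_i(\bx)=f_i(\bxt)\}\leq 1$ in the coordinates where $x_i=x_i'$, so the equality case is where I expect to have to be careful. Equality holds precisely when every such indicator is forced to $1$, that is, when $x_i=x_i'$ always implies $f_i(\bx)=f_i(\bxt)$; this is exactly the hypothesis that $f_i(\bx)$ depends only on $x_i$, giving the stated equality condition. The genuine obstacle here is conceptual rather than computational: since $f_i$ may depend on all of $\bx$, the summand does not factor across coordinates in general, and the whole argument rests on recognizing that the coordinate coupling enters only through these indicators, which may be discarded for the upper bound and pinned down exactly for the equality case.
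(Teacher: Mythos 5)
Your proposal is correct and is essentially the paper's own argument: both average over $\bk$ first, observe that for a fixed input pair the collision constraint decouples into per-coordinate conditions on $k_i$ (exactly one solution when $x_i\neq x_i'$, an indicator $\mathbf{1}\{f_i(\bx)=f_i(\bxt)\}$ when $x_i=x_i'$), and bound by dropping those indicators. The only differences are presentational---the paper phrases the averaging via a random $\bK$ and groups input pairs by Hamming distance (using the binomial theorem where you factor the sum coordinate-wise), and your equality analysis even yields the slightly stronger ``if and only if'' characterization, which is consistent with the paper's ``if.''
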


Note that by Jensen,
\begin{eqnarray*}
\frac{1}{q^n}\sum_{\bk\in \efq^n}H_2(\gk(\bA)) & = &
-\sum_{\bk}\frac{1}{q^n} \log\big(\cp(\gk(\bA))\big)\\
& \geq & -\log\left(\frac{1}{q^n}\sum_{\bk} \cp(\gk(\bA))\right),  
\end{eqnarray*}
and hence Theorem \ref{thm:main2} implies Theorem \ref{thm:main}.

As stated in the theorem itself, the bound of Theorem \ref{thm:main2}
is tight. The bound of Theorem \ref{thm:main} is also
tight, as seen by the following proposition.

\begin{proposition} \label{prop:tight}
For the function $f\colon\efq^n\to\efq^n$ defined by
$f(\bx):=(x_1^2,\ldots, x_n^2)$, we have (using the terminology of
Theorem \ref{thm:main})
$$
\frac{1}{q^n}\sum_{\bk\in \efq^n} H(\gk(\bA)) =
\log(q^n)-n\left(1-\frac{1}{q}\right),
$$
and
$$
\frac{1}{q^n}\sum_{\bk\in \efq^n} H_2(\gk(\bA)) = \begin{cases}
\log(q^n)-n\left(1-\frac{1}{q}\right) & \text{if $q$ is even,}\\
\log(q^n)-n\log\left(2-\frac{1}{q}\right) & \text{otherwise}.
\end{cases}
$$
\end{proposition}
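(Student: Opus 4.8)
The plan is to exploit the product structure of $f$. Since each component $f_i(\bx)=x_i^2$ depends only on $x_i$, the map factors coordinatewise as $\gk(\bA)=(g_{k_1}(A_1),\dots,g_{k_n}(A_n))$ with the single-variable maps $g_{k_i}(x)=x^2+k_i x$, and $A_1,\dots,A_n$ are independent and uniform on $\efq$. Hence the output is a product distribution, so $\cp(\gk(\bA))=\prod_i \cp(g_{k_i}(A_i))$, and consequently both $H$ and $H_2=-\log\cp$ add over the coordinates. Averaging over $\bk\in\efq^n$ then splits as well: for each of $H$ and $H_2$ the average equals $n$ times the single-coordinate average over $k\in\efq$ of the corresponding quantity for $g_k(x)=x^2+kx$ with $A$ uniform on $\efq$. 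I emphasize that for $H_2$ this is $n$ times the average of $H_2(g_k(A))$, \emph{not} $-\log$ of the average collision probability; this distinction is what lets a parity-dependent answer survive. Thus everything reduces to the one-dimensional analysis of $g_k$.

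First I would treat $q$ odd. Here $\charac(\efq)\neq 2$, so completing the square gives $g_k(x)=(x+k/2)^2-(k/2)^2$; as $x$ ranges uniformly over $\efq$ so does $x+k/2$, and the final additive shift is a bijection, so for \emph{every} $k$ the law of $g_k(A)$ is a translate of the law of $A^2$. The squaring map fixes $0$ and is two-to-one on $\efq^*$, so $A^2$ takes the value $0$ with probability $1/q$ and each of the $(q-1)/2$ nonzero squares with probability $2/q$. A direct computation then yields $H(A^2)=\log q-(1-1/q)$, $\cp(A^2)=(2q-1)/q^2$, and $H_2(A^2)=-\log\cp(A^2)=\log q-\log(2-1/q)$, and these values hold for all $k$.

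Next I would treat $q$ even, where completing the square is unavailable and a separate collision count is needed. In characteristic $2$ the map $x\mapsto x^2$ is the Frobenius automorphism, hence a bijection, so $g_0(A)$ is uniform on $\efq$. For $k\neq 0$ I would note that $g_k(x)=g_k(y)$ iff $(x+y)(x+y+k)=0$, i.e. iff $y\in\{x,\,x+k\}$; since $k\neq 0$ this makes $g_k$ exactly two-to-one, so $g_k(A)$ is uniform on a set of size $q/2$. Thus for each $k$ the distribution is uniform, giving $H(g_k(A))=H_2(g_k(A))$ equal to $\log q$ for $k=0$ and to $\log q-1$ for the remaining $q-1$ values of $k$. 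Averaging, both $H$ and $H_2$ give $\tfrac1q\big(\log q+(q-1)(\log q-1)\big)=\log q-(1-1/q)$.

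Assembling the pieces, the single-coordinate Shannon average equals $\log q-(1-1/q)$ in both parities, so multiplying by $n$ gives $\log(q^n)-n(1-1/q)$ with no case split. For $H_2$ the single-coordinate average is $\log q-\log(2-1/q)$ when $q$ is odd and $\log q-(1-1/q)$ when $q$ is even; multiplying by $n$ reproduces the two cases in the statement. I expect the only genuine subtlety to be the characteristic-$2$ analysis: it is precisely the failure of completing the square there, and the resulting fact that each $g_k(A)$ is truly uniform rather than a shifted square, that makes $H$ and $H_2$ coincide for even $q$ while they differ for odd $q$, which is the source of the case distinction in the \renyis bound.
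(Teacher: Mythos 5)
Your proof is correct and takes essentially the same route as the paper: reduce to the one-dimensional map $g_k(x)=x^2+kx$ via the product structure, then analyze fibers in each characteristic (a bijection for $k=0$ and an exact two-to-one map for $k\neq 0$ when $q$ is even; one singleton fiber plus $(q-1)/2$ doubleton fibers, independent of $k$, when $q$ is odd), yielding the same per-coordinate values $\cp = 2^{[k\neq 0]}/q$ and $(2q-1)/q^2$. The only cosmetic differences are that you complete the square for odd $q$ where the paper uses the symmetry $x\mapsto -k-x$, and you average $H_2$ coordinatewise where the paper computes $\cp(\gk(\bA))=2^{\weight(\bk)}/q^n$ and invokes the identity $\sum_{\bk\in\efq^n}\weight(\bk)=nq^n-nq^{n-1}$.
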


\section{Proof of Theorem \ref{thm:main2}}

The proof begins as the proof of the Leftover Hash Lemma as
appearing in \cite{CF}. Letting $\bK$ and $\bA'$ be random variables
uniformly distributed over $\efq^n$ such that $\bA$, $\bK$ and $\bA'$
are jointly independent, the left-hand side of (\ref{eq:main2}) can be
written as  
\begin{eqnarray*}
\frac{1}{q^n}\sum_{\bk\in \efq^n} \cp(\gk(\bA)) & = & \sum_{\bk\in
\efq^n} \pr(\bK=\bk)\cdot \pr\big(\gk(\bA) = 
\gk(\bA')\big) \\
  & = & \sum_{\bk\in \efq^n} \pr(\bK=\bk)\cdot \pr\big(\gK(\bA) =
\gK(\bA')|\bK = \bk\big) \\
  & = & \pr\big(\gK(\bA) = \gK(\bA')\big).
\end{eqnarray*}

It follows that Theorem \ref{thm:main2} is an immediate consequence
of the following Lemma.

\begin{lemma}
Using the above notation,
$$
\pr\big(\gK(\bA) = \gK(\bA')\big)\leq 
\frac{1}{q^n}\left(2-\frac{1}{q}\right)^n
$$
with equality if for all $i$, $f_i(\bx)$ depends only on $x_i$. 
\end{lemma}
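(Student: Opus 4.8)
The plan is to condition on the outcomes of $\bA$ and $\bA'$ and to observe that, once these are fixed, the event $\gK(\bA)=\gK(\bA')$ splits into $n$ separate coordinate conditions, one for each component $K_i$ of $\bK$. Fixing $\ba,\ba'\in\efq^n$, the $i$th defining equation reads $f_i(\ba)-f_i(\ba')+K_i(a_i-a_i')=0$, and since the $K_i$ are jointly independent and uniform, the conditional probability of $\gK(\ba)=\gK(\ba')$ factors as a product over $i$ of the probabilities of these single-variable events.

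I would then distinguish two kinds of coordinates. If $a_i\neq a_i'$, the equation is a nondegenerate linear equation in $K_i$, so it has a unique solution and contributes a factor $1/q$. If $a_i=a_i'$, the term $K_i(a_i-a_i')$ vanishes and the condition degenerates to the $K_i$-free indicator $\mathbf{1}\{f_i(\ba)=f_i(\ba')\}$. Hence the conditional probability equals $q^{-d}\prod_{i:a_i=a_i'}\mathbf{1}\{f_i(\ba)=f_i(\ba')\}$, where $d=\#\{i:a_i\neq a_i'\}$ is the Hamming distance between $\ba$ and $\ba'$.

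Averaging over $\ba,\ba'$ uniform and bounding each indicator by $1$, the resulting double sum factors across coordinates, because $q^{-d}=\prod_i q^{-\mathbf{1}\{a_i\neq a_i'\}}$ is product form and the pair distribution is too. Each coordinate then contributes $\sum_{a_i,a_i'\in\efq}q^{-\mathbf{1}\{a_i\neq a_i'\}}=q+(q-1)=2q-1$, so the sum is $(2q-1)^n$; dividing by $q^{2n}$ gives exactly $q^{-n}(2-1/q)^n$. For the equality claim, if each $f_i$ depends only on $x_i$ then $a_i=a_i'$ already forces $f_i(\ba)=f_i(\ba')$, so every indicator that was bounded by $1$ is in fact identically $1$ and the inequality is saturated.

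There is no deep obstacle here; the whole argument is a careful conditioning computation. The one point requiring care is the treatment of the coordinates with $a_i=a_i'$: these contribute a deterministic constraint involving $f$ rather than any randomness in $K_i$, and since $f_i$ may in general depend on all coordinates of its argument, the associated product of indicators does \emph{not} factor across coordinates. Bounding those indicators by $1$ is what restores the clean product structure, and it is precisely this slack that the equality hypothesis removes, since $f_i$ depending only on $x_i$ makes each such indicator equal to $1$. Confirming that the surviving sum genuinely factors coordinate-by-coordinate is then the main bookkeeping step.
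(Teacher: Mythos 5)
Your proposal is correct and follows essentially the same route as the paper: fix $\ba,\ba'$, compute the probability over $\bK$ coordinate by coordinate (a factor $1/q$ for each coordinate where $a_i\neq a_i'$, and an $f$-dependent indicator where $a_i=a_i'$), bound the indicators by $1$, and average, with the equality case handled exactly as in the paper's footnote. The only cosmetic difference is the final bookkeeping — you evaluate the averaged sum by factoring it coordinate-by-coordinate into $(2q-1)^n/q^{2n}$, while the paper groups pairs by Hamming distance $d$ and applies the binomial theorem, which is the same computation in a different order.
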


\begin{proof}
For $\bx,\bx'\in \efq^n$, let $\dham(\bx,\bx')$ be the Hamming  
distance between $\bx$ and $\bx'$ (number of coordinates $i$ for which
$x_i\neq x'_i$) and let $\bx\odot\bx':=(x_1 x'_1,\ldots, x_n
x'_n)$. We have 

\begin{eqnarray}
\pr\big(\gK(\bA) =  \gK(\bA')\big) & = &  \pr(\bA=\bA') + \sum_{d=1}^n
\pr(\dham(\bA,\bA')=d)\cdot\nonumber\\
&& \cdot \pr\big(\gK(\bA) = \gK(\bA')\big|\dham(\bA,\bA')=d\big). \label{eq:cont}
\end{eqnarray}

Now, 
$$
\pr\big(\gK(\bA) = \gK(\bA')\big|\dham(\bA,\bA')=d\big)
$$
(probability over $\bK$, $\bA$ and $\bA'$) is the average over pairs
of vectors $\bs{a},\bs{a'}\in\efq^n$ of Hamming 
distance $d$ of expressions like
\begin{equation}\label{eq:mid}
\pr \big(f(\ba)+ \bK\odot \ba = f(\ba')+ \bK\odot \ba'\big)
\end{equation}
(probability over $\bK$).
The last expression is either $0$ (if $f_i(\ba)\neq
f_i(\ba')$ for some $i$ for which $a_i=a'_i$),\footnote{Note that
this cannot happen if for all $i$, $f_i(\bx)$ depends only on
$x_i$. This will show that for such functions we have equality in the
proposition.} or $q^{n-d}/q^n$ otherwise ($d$ entries of $\bK$ are
determined by the equation, and the other $n-d$ entries are free). So,
in either case, the expression in (\ref{eq:mid}) is $\leq q^{-d}$
(with equality if for all $i$, $f_i$ depends only on the $i$th argument),
and hence so is the average of these expressions. Substituting in
(\ref{eq:cont}), we get 
\begin{eqnarray*}
\pr\big(\gK(\bA)=\gK(\bA')\big)  &\leq &
\cp(\bA) + \sum_{d=1}^n
\frac{q^n\binom{n}{d}(q-1)^d}{q^{2n}}q^{-d} \\
& = &\frac{1}{q^n} + \frac{1}{q^n}\sum_{d=1}^n
\binom{n}{d}\left(1-\frac{1}{q}\right)^d  \\
& = &\frac{1}{q^n} + \frac{1}{q^n}\left[\left(2-\frac{1}{q}\right)^n-1
\right]\\
& = & \frac{1}{q^n}\left(2-\frac{1}{q}\right)^n,
\end{eqnarray*}
with equality if for all $i$, $f_i(\bx)$ depends only on $x_i$.

\end{proof}

\section{Proof of proposition \ref{prop:tight}}
The assertion regarding the average Shannon entropy will follow immediately
from the chain rule for conditional Shannon entropy if
we prove that for $n=1$ and for the function $f\colon \efq\to\efq$
defined by $f(x)=x^2$, we have 
\begin{equation}\label{eq:one_enough}
\frac{1}{q}\sum_{k\in\efq}H(g_k(A)) = \log(q)-\left(1-\frac{1}{q}\right)
\end{equation}
for $A$ uniformly distributed on $\efq$.

Suppose first that $q$ is even. Then $g_0=(x\mapsto x^2)$ is a
permutation on $\efq$ (in fact, an automorphism), and so 
$H(g_0(A))=\log(q)$. For $k,y\in \efq$, let $\xkys:=\gks^{-1}(y)$. We
claim that for all $k\in\efq^*$ and for all $y\in 
\efq$ with $\xkys\neq \emptyset$, there are exactly 2 elements in
$\xkys$: On one hand,   
there are at most two solutions to a quadratic equation, and on the
other hand, for $x\in \xkys$, $x+k$ is different from $x$ and
satisfies $g_k(x+k)=g_k(x)$, which means that $x+k\in \xkys$. Hence
in the case of characteristic 2, the average entropy is
$(1/q)\cdot\log(q)+(1-1/q)\cdot\log(q/2)$, as desired.  

For odd $q$, we claim that for all $k\in\efq$, there is
a single $y$ with $|\xkys|=1$, and $(q-1)/2$ values of $y$ with
$|\xkys|=2$:  Fix $k$, take $y$ with $\xkys\neq\emptyset$, and let
$x\in \xkys$. Clearly, $g_k(-k-x)=g_k(x)$, and if $x\neq -k/2$, then
$-k-x\neq x$, which implies that $|\xkys|=2$. For $y$ with $-k/2\in
\xkys$, $|\xkys|$ must therefore be odd, and hence necessarily
equals\footnote{Of course, the last $y$ equals $-k^2/4$, and the
fact that $|\xkys|=1$ for this $y$ may also be verified directly.} 1.
Hence in the case of odd characteristic, the average entropy is
$((q-1)/2)\cdot(2/q)\cdot \log(q/2) + (1/q)\cdot\log(q)$, as in
(\ref{eq:one_enough}).   

It remains to calculate the average \renyis entropy for
$f=(\bx\mapsto(x_1^2,\ldots,x_n^2))$. It follows from the above
discussion on the Shannon entropy that if $q$ is even, then for all
$\bk$ and all $i$, the collision probability of the $i$-th entry of
$\gk(\bA)$ equals $2/q$ if $k_i\neq 0$ (uniform distribution on $q/2$
elements), and $1/q$ if $k_i=0$. As the collision probability of a
vector of jointly independent random variables is the product of the
individual collision probabilities, it follows that $\cp(g_{\bk}(\bA))
= 2^{\weight(\bk)}/q^n$, where $\weight(\bk)$ is the Hamming weight
of $\bk$ (number of nonzero coordinates in $\bk$). 

Since\footnote{One way to verify the following identity is to note
that the sum $W_q(n)$ of the weights of all vectors in $\efq^n$
satisfies $W_q(1)=q-1$ and
$W_q(n)=W_q(n-1)+(q-1)\cdot(W_q(n-1)+q^{n-1})$ for $n\geq 2$.}
$\sum_{\bk\in\efq^n}\weight(\bk) = nq^n-nq^{n-1}$,
we get
\begin{eqnarray*}
\frac{1}{q^n}\sum_{\bk}H_2(g_{\bk}(\bA)) & = &
\frac{1}{q^n}\sum_{\bk}(\log(q^n)-w(\bk)) \\ 
& = & \log(q^n) - \frac{1}{q^n}(nq^n-nq^{n-1}) \\ 
& = & \log(q^n) - n\left(1-\frac{1}{q}\right),
\end{eqnarray*}
as desired.

Finally, if $q$ is odd, then it follows from the discussion in the
beginning of the proof that for all $\bk$, the
collision probability of any entry of $\gk(\bA)$ equals
$$
\frac{1}{q^2}+\frac{q-1}{2}\bigg(\frac{2}{q}\bigg)^2 = \frac{2q-1}{q^2}.
$$
Because the collision probability of $\gk(\bA)$ is the product of the
collision probabilities of the individual entries, it
follows that for all $\bk$,
$$
H_2(g_{\bk}(\bA)) = -\log\left(\frac{1}{q^{2n}}\cdot(2q-1)^n\right)
= -\log\left(\frac{1}{q^n}\cdot\left(2-\frac{1}{q}\right)^n\right),
$$
which completes the proof.

\begin{remark}
{\rm
Note that in Proposition \ref{prop:tight}, the components $f_i$ may be
any quadratic functions $x_i\mapsto a_ix_i^2+b_ix_i+c_i$ with $a_i\neq
0$ for all $i$
(eliminating $a_i$ and $c_i$ is done by an invertible function, and
then the linear term is ``absorbed'' in the averaging over $k_i$).  
}
\end{remark}

\section*{Acknowledgments} 
We are grateful to Avner Dor for carefully reading several earlier
drafts and for his helpful comments. We would also like to thank Simon
Litsyn for pointing us to \cite{Car}.


\end{document}